\newcommand{\Var}{\mathop{\rm Var}}
\newcommand{\cov}{\mathop{\rm cov}}
\newcommand{\vk}{\varkappa}
\newcommand{\ov}{\overline}
\newcommand{\vf}{\varphi}
\newcommand{\ve}{\varepsilon}
\newcommand{\cT}{{\mathcal T}}
\newcommand{\wt}{\widetilde}
\newcommand{\1}{1\!\!\,{\rm I}}
\newcommand{\mbR}{{\mathbb R}}
\theoremstyle{plain}
\newtheorem{thm}{Theorem}
\newtheorem{lem}[thm]{Lemma}
\theoremstyle{definition}
\newtheorem{defn}[thm]{Definition}
\newtheorem{prop}[thm]{Proposition}
\begin{document}
\large
\begin{center}
{\Large\bf Properties of Gaussian local times }
\end{center}

Andrey Dorogovtsev ${}^a$, Olga Izyumtseva ${}^b$

${}^a$ \ Institute of Mathematics National Academy of Sciences of Ukraine,

01601 Ukraine, Kiev--4, 3, Tereshenkivska st.

${}^b$ \ Institute of Mathematics National Academy of Sciences of Ukraine,

01601 Ukraine, Kiev--4, 3, Tereshenkivska st.
\vskip20pt

{\bf Abstract}

In the paper we investigate properties of local time for one class of Gaussian processes. These processes are called by integrators since every function from $L_2([0;1])$ can be integrated over it. Using the white noise representation one can associate integrators with continuous linear operators in $L_2([0;1]).$ In terms of these operators we discuss existence and properties of local time for integrators. Also, we study the asymptotic behaviour of the Brownian self-intersection local time when its end-point tends to infinity.

{\bf Keywords}: Gaussian integrator, local time, white noise, local nondeterminism, self-intersection local time.

\begin{center}
{\Large\bf Properties of Gaussian local time }
\end{center}

\section{Introduction}
\label{section1}

  The aim of this article is to study the local time for a certain class of Gaussian processes. Since the works of S. Berman \cite{7} the existence and properties of the local time are studied for a wide class of Gaussian processes and fields \cite{17}. In the case of Brownian motion the local time can be investigated using the parabolic equations and potential theory due to the independence of increments and self-similarity. For general Gaussian processes S. Berman proposed the notion of local nondeterminism which in some sense means almost independency of increments on small intervals. Under some technical assumptions this property leads to the existence and regularity of local time with respect to both spatial and time variables. Different authors proposed the version of local nondeterminism property for Gaussian processes and fields and proved not only existence of the local time but investigated some its asymptotic properties such as the law of iterated logarithm or small ball probabilities \cite{18}. Nevertheless the local nondeterminism property is hard to check for an arbitrary Gaussian process. Simple sufficient conditions were given for processes with stationary increments or for self-similar processes \cite{19}. In the case of planar Gaussian process the situation is much worse. Namely, for Brownian motion on the plane the existence of multiple self-intersection points is well-known \cite{20}. The corresponding local time of multiple self-intersection needs to be properly renormalized. Such renormalization was done by S. Varadhan \cite{11}, E.B. Dynkin \cite{12}, J. Rosen \cite{21}. Later J.-F. Le Gall \cite{14} obtained asymptotic expansion for the area of Brownian sausage which contains renormalized self-intersection local times as the coefficients. As all these results are essentially based on the structure of Brownian motion, used here technic can not be expanded on other Gaussian processes. But the question of such generalization is quiet interesting in view of constructing the random polymer models using  not only Markov processes (in some cases there are no reasons for molecule to differ the starting point and the end-point). All mentioned reasons lead to the attempt to find such class of Gaussian processes for which some version of local nondeterminism holds and results related to existence and renormalization of the local time and self--intersection local times can be achieved. Such class of processes was introduced by A.A. Dorogovtsev in \cite{1} in connection with the anticipating stochastic integration.  The original definition is the following.

 \begin{defn}[\cite{1}]
 \label{defn1}
 A centered Gaussian process $x(t),\ t\in[0; 1]$ is said to be an integrator if there exists the constant $c>0$ such that for an arbitrary partition $0=t_0<t_1<\ldots<t_n=1$ and real numbers $a_0, \ldots, a_{n-1} $
 \begin{equation}
 \label{eq1}
 E\Big(\sum^{n-1}_{k=0}a_k(x(t_{k+1})-x(t_k))\Big)^2\leq c\sum^{n-1}_{k=0}a^2_k\Delta t_k.
 \end{equation}
 \end{defn}

 The inequality \eqref{eq1} allows to integrate functions from $L_2([0; 1])$ with respect to $x.$ This naturally leads to the definition of Skorokhod type stochastic integral with respect to $x.$ In \cite{1} the corresponding stochastic calculus including It\^o formula for $x$ was considered. The following statement describes the structure of integrators.

 \begin{prop} [\cite{1}]
 \label{prop2}
 The centered Gaussian process $x(t),\ t\in[0; 1]$ is  an integrator iff there exist a Gaussian white noise $\xi$ \cite{3, 4} in $L_2([0; 1])$ and a continuous linear operator $A$ in the same space such that
 \begin{equation}
\label{eq2}
x(t)=(A\1_{[0; 1]}, \xi),\ t\in[0; 1].
\end{equation}
\end{prop}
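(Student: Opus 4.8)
For the necessity I would prove the easy implication first. Suppose $x(t)=(A\1_{[0;t]},\xi)$ for a continuous linear operator $A$ and a white noise $\xi$. Fix a partition $0=t_0<\dots<t_n=1$ and reals $a_0,\dots,a_{n-1}$. By linearity of $A$ and of the white noise pairing, $x(t_{k+1})-x(t_k)=(A\1_{[t_k;t_{k+1}]},\xi)$, so with the step function $f=\sum_k a_k\1_{[t_k;t_{k+1}]}$ one has $\sum_k a_k(x(t_{k+1})-x(t_k))=(Af,\xi)$. Using the defining isometry property $E(h,\xi)^2=\|h\|^2_{L_2([0;1])}$ and the disjointness of the intervals $[t_k;t_{k+1}]$, I obtain $E\big(\sum_k a_k(x(t_{k+1})-x(t_k))\big)^2=\|Af\|^2\le\|A\|^2\|f\|^2=\|A\|^2\sum_k a_k^2\Delta t_k$, which is \eqref{eq1} with $c=\|A\|^2$.

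For the converse the plan is to convert the integrator inequality into boundedness of a stochastic integral and then factor its covariance. On the dense subspace $S\subset L_2([0;1])$ of step functions I would set $I(f)=\sum_k a_k(x(t_{k+1})-x(t_k))$ for $f=\sum_k a_k\1_{[t_k;t_{k+1}]}$. Inequality \eqref{eq1} reads precisely $EI(f)^2\le c\|f\|^2_{L_2}$, so $I$ is bounded on $S$ and extends uniquely to a bounded operator $I:L_2([0;1])\to L_2(\Omega)$. Since every $x(t)$ is centered Gaussian and $L_2$-limits of centered Gaussian variables stay centered Gaussian, the image of $I$ lies in the Gaussian space generated by $x$.

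Next I would analyze the covariance. The form $(f,g)\mapsto E[I(f)I(g)]$ is bounded, symmetric and nonnegative, so by the Riesz representation theorem there is a bounded nonnegative self-adjoint operator $B$ on $L_2([0;1])$ with $E[I(f)I(g)]=(Bf,g)$. Putting $A=B^{1/2}$, which is again bounded and self-adjoint, gives the key factorization $E[I(f)I(g)]=(Af,Ag)$. This $A$ is the operator appearing in \eqref{eq2}.

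The remaining and most delicate step is to produce a white noise $\xi$ realizing $I(f)=(Af,\xi)$. I would define $J$ on the range of $A$ by $J(Af):=I(f)$. The factorization shows at once that $J$ is well defined (if $Af=Ag$ then $EI(f-g)^2=\|A(f-g)\|^2=0$) and that it is an isometry of $\overline{\mathrm{Range}(A)}$ into the Gaussian space. To turn $J$ into a genuine white noise on all of $L_2([0;1])$ I would extend it to the orthogonal complement $\ker A=(\overline{\mathrm{Range}(A)})^{\perp}$ by an independent Gaussian white noise, constructed on an enlarged probability space; then $(h,\xi):=J(h)$ defines a white noise in $L_2([0;1])$. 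Taking $f=\1_{[0;t]}$ finally yields $x(t)=I(\1_{[0;t]})=J(A\1_{[0;t]})=(A\1_{[0;t]},\xi)$, using $x(0)=0$. I expect the main obstacle to be exactly this construction: verifying that $J$ is well defined on the range of $A$ and, if one wants $\xi$ on the original space, that the Gaussian space generated by $x$ is rich enough to carry the white noise indexed by $\ker A$ — a difficulty that passing to an enlarged probability space removes cleanly.
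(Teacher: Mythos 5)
Your proposal is correct, but note that there is nothing in this paper to compare it against: Proposition \ref{prop2} is quoted from \cite{1} and no proof is given here, so your argument has to stand on its own — and it does. The easy direction (representation implies \eqref{eq1} with $c=\|A\|^2$) is right, and the converse is the standard factorization argument, carried out correctly in all three steps: extending $f\mapsto\sum_k a_k(x(t_{k+1})-x(t_k))$ to a bounded map $I:L_2([0;1])\to L_2(\Omega)$ with values in the Gaussian space of $x$; representing the covariance form by a bounded nonnegative self-adjoint $B$ and setting $A=B^{1/2}$; and defining the isometry $J(Af):=I(f)$ on $\overline{\mathrm{Range}(A)}$, completed by an independent white noise on $\ker A=(\overline{\mathrm{Range}(A)})^{\perp}$ over an enlarged probability space. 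The enlargement is genuinely unavoidable (take $x\equiv 0$, so $A=0$ and the Gaussian space of $x$ is trivial), so you are right that no construction inside the original Gaussian space can work in general; an equivalent alternative route fixes the white noise first (any $\xi$ on an enlarged space whose Gaussian space contains that of $x$) and then reads off $A$ from $x(t)=(h_t,\xi)$, but it buys nothing over your order of construction. Two remarks on the statement rather than on your proof. First, $\1_{[0;1]}$ in \eqref{eq2} is a typo for $\1_{[0;t]}$, as the paper's own use of the representation in Section \ref{section4} confirms; your proof silently and correctly reads it that way. Second, you are right to flag $x(0)=0$: Definition \ref{defn1} constrains only increments, so as literally stated the ``only if'' direction fails for $x(t)\equiv\zeta$ with $\zeta$ a nondegenerate centered Gaussian variable, which satisfies \eqref{eq1} trivially but admits no representation \eqref{eq2}; one must either assume $x(0)=0$, as in \cite{1}, or state the representation for increments. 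With that proviso your proof is complete.
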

In the article we use the language of white noise analysis \cite{3,4,15,16}.
Note that if $A$ equals identity, then $x$ in the expression \eqref{eq2} is a Wiener process. For continuously invertible operator $A$ one can expect that $x$ will inherit some properties of the Wiener process. For example, it will be proved in the section 2 that if $A$ is continuously invertible, then $x$ has a local time at any point $u\in\mbR.$ Such local time can be obtained as the occupation density. Also it will be checked that this density is a continuous function in spatial and time variables.  In the section 3 we will prove a continuous dependence of local times of integrators on generating them operators.
 The main method of our investigations is based on the studying of functional properties of the Hilbert-valued functions. In particular, we obtain some estimations for the Gram determinant constructed by the increments of such function.  These estimations allow to investigate the conditional moments of the Brownian self-intersection local time in dimensions one and two when the end point (the value $w(1)$) tends to infinity. We establish the speed of decreasing of the mentioned moments. The question of the conditional behavior of the self-intersection local  times is inspired by the studying properties of continuous polymer  models [22-24].
  The real polymers can not have self-intersections due to the excluded volume effect \cite{22}. But the energy of interaction between the monomers from different places in polymer molecule influences on its form. Flory proposed the evaluation of the size of polymer based on the counting of interaction energy \cite{22}.  The Brownian path can be viewed as an ideal Gaussian  model of polymer \cite{22}. Applying the Flory method to it one have to substitute the energy of interaction by the self-intersection local time. So the question of the dependence the self-intersection local time on the size of Brownian path is natural. We present the corresponding estimations in the section 4.
   Some necessary facts from geometry of Hilbert-valued functions are proved in the appendix of the paper.

\section{Existence of local time for Gaussian integrators}
\label{section2}

 Let us recall approaches of defining the local time for the one-dimensional Wiener process $w(t),\ t\in[0; 1].$ Put
 $$
 f_\ve(y)=\frac{1}{\sqrt{2\pi\ve}}e^{-\frac{y^2}{2\ve}},\ y\in\mbR,\ \ve>0.
 $$
 \begin{defn}
 \label{defn3}
 For any $t\in[0; 1]$ and $u\in\mbR$
 $$
 \int^t_0\delta_u(w(s))ds:=L_2\mbox{-}\lim_{\ve\to0}\int^t_0f_\ve(w(s)-u)ds
 $$
 is said to be a local time of the Wiener process at the point $u$ up to time $t.$
 \end{defn}

 Consider an occupation measure of $w$ up to time $t$ defined by the formula
 $$
 \mu_t(D)=\int^t_0\1_D(w(s))ds,\ D\in B(\mbR)
 $$
($B(\mbR)$ is the Borel $\sigma$-field on $\mbR$). $\mu_t(D)$ equals the Lebesgue measure of the time which the trajectory of the Wiener process spends in the set $D$ up to time $t.$ Levy in \cite{5} proved that for almost all trajectories of $w$ and any $t\in[0; 1]$ the measure $\mu_t$ has a density, i.e. there exists the random function $l(u, t),\ u\in\mbR$ such that a.s. for any $t\in[0; 1]$ and $D\in B(\mbR)$
 $$
 \mu_t(D)=\int_Dl(u, t)du.
 $$
 Trotter in \cite{6} proved that the density of an occupation measure of the Wiener process is continuous in $u$ and $t.$ Useful consequence of joint continuity is the following occupation density formula. For every continuous function $\vf$ on $\mbR$ with compact support
 \begin{equation}
 \label{eq4}
 \int^t_0\vf(w(s))ds=\int_{\mbR}\vf(u)l(u, t)du.
 \end{equation}
 It follows from \eqref{eq4} that
 $$
 \int^t_0\delta_u(w(s))ds=\lim_{\ve\to0}\int^t_0f_\ve(w(s)-u)ds=
 $$
 $$
 =\lim_{\ve\to0}\int_{\mbR}f_\ve(v-u)l(v, t)dv=l(u, t).
 $$
 Therefore, the value of the density of occupation measure $l(u, t)$ is the local time of the Wiener process at $u$ up to time $t.$ In this section we will establish the same properties of the local time for Gaussian integrators.

 To prove the existence of the local time for the Gaussian integrator $x$ with the representation \eqref{eq2} we need the notion of local nondeterminism for Gaussian processes introduced by Berman in \cite{7}. Let $\{y(t),\ t\in J\}$ be $\mbR$-valued zero mean Gaussian process on an open interval $J.$ Suppose that there exists $d>0$ such that

 1) $E(y(t)-y(s))^2>0,$ for all
 $
 s, t\in J: 0\leq|t-s|\leq d;
 $

 2) $Ey^2(t)>0$ for all $t\in J.$

 For $m\geq2, \ t_1, \ldots, t_m\in J, \ t_1<t_2<\ldots< t_m $ put
 $$
 V_m=\frac{\Var(y(t_m)-y(t_{m-1})/y(t_1), \ldots, y(t_{m-1}))}
{\Var(y(t_m)-y(t_{m-1}))}
$$
which is the ratio of the conditional and the unconditional variance.

 \begin{defn} [\cite{7}]
 \label{defn4}
 A Gaussian process $y$ is said to be locally nondetermined on $J$ if for every $m\geq2$
 $$
 \lim_{c\to0}\inf_{t_m-t_1\leq c}V_m>0.
 $$
The following statement was proved in \cite{7} and demonstrates that the local nondeterminism property can be used as one of the sufficient conditions for the existence and smoothness of the local time for general Gaussian process.
 \end{defn}
 \begin{thm} [\rm \cite{7}]
 \label{thm5}
 Let $y(t),\ t\in[0; T]$ be a centered Gaussian process satisfying the following three conditions

 1) $y(0)=0;$

 2) $y$ is locally nondetermined on $(0; T);$

 3) there exist positive real numbers $\gamma, \delta$ and $a$ continuous even function $b(t)$ such that $b(0)=0,\  b(t)>0,\  t\in(0; T],$
 $$
 \lim_{h\to0}h^{-\gamma}\int^h_0(b(t))^{-1-2\delta}dt=0
 $$
 and $E(y(t)-y(s))^2\geq b^2(t-s)$ for all $s, t\in[0; T].$ Then there exists a version $l(u, t),\ u\in\mbR,\ t\in[0; T]$ of the local time of the process $y$ which is jointly continuous in $(u, t)$ and which satisfies a H\"older condition in $t$ uniformly in $u,$ i.e. for every $\gamma_1<\gamma,$ i.e. there exist positive and finite random variables $\eta$ and $\eta_1$ such that
 $$
 \sup_u|l(u, t+h)-l(u, t)|\leq \eta_1|h|^{\gamma_1}
 $$
 for all $s, t, t+h\in[0; T]$ and all $|h|<\eta.$
 \end{thm}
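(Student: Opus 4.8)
The plan is to follow the Fourier-analytic route that is standard for occupation densities of Gaussian processes. First I would represent the local time formally through the Fourier inversion of the delta function,
$$
l(u,t)=\frac{1}{2\pi}\int_{\mbR}\int_0^t e^{i\lambda(y(s)-u)}\,ds\,d\lambda ,
$$
and justify this as the occupation density by checking that the right-hand side exists in $L_2$ and satisfies the defining identity $\int_0^t\vf(y(s))\,ds=\int_{\mbR}\vf(u)l(u,t)\,du$ of \eqref{eq4}. The whole argument then reduces to even-moment estimates for the two families of increments $l(u,t)-l(u',t)$ and $l(u,t+h)-l(u,t)$, from which joint continuity and the H\"older bound in $t$ follow by a multiparameter Kolmogorov / Garsia--Rodemich--Rumsey continuity criterion.

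Next I would compute these moments. Expanding $E|l(u,t)-l(u',t)|^{2k}$ and taking expectation of the resulting exponential, one uses that for a centered Gaussian vector
$$
E\exp\Big(i\sum_{j=1}^{2k}\lambda_j y(s_j)\Big)=\exp\Big(-\tfrac12\Var\big(\textstyle\sum_j\lambda_j y(s_j)\big)\Big).
$$
After ordering the times $s_1<\dots<s_{2k}$ and passing to the increment variables, the expectation becomes an integral over the $\lambda_j$ of a Gaussian exponential whose quadratic form is the covariance of the increments. The factors $|e^{-i\lambda_j u}-e^{-i\lambda_j u'}|$ supply the powers of $|u-u'|$ for the spatial estimate, while restricting the inner time integration to $[t,t+h]$ supplies the powers of $h$ for the temporal one.

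The technical heart, and the step I expect to be the main obstacle, is bounding these Gaussian $\lambda$-integrals. They converge only when the quadratic form in the $\lambda_j$ is bounded below, i.e. only when the Gram determinant of the increments stays away from $0$. This is exactly where local nondeterminism enters: on a sufficiently small interval it yields a constant $K_m>0$ with
$$
\Var\Big(\sum_{j}\lambda_j\big(y(s_j)-y(s_{j-1})\big)\Big)\ge K_m\sum_j\lambda_j^2\,\Var\big(y(s_j)-y(s_{j-1})\big),
$$
so that the Gaussian density effectively factorizes across the increments. Combined with the lower bound $E(y(t)-y(s))^2\ge b^2(t-s)$ from hypothesis 3), each one-dimensional $\lambda_j$-integral is then controlled by a quantity of size $b(s_j-s_{j-1})^{-1}$.

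Finally I would assemble the estimates. The integrability hypothesis $\lim_{h\to0}h^{-\gamma}\int_0^h b(t)^{-1-2\delta}\,dt=0$ is designed precisely so that, after integrating the ordered increment lengths against the factors produced above, $E|l(u,t+h)-l(u,t)|^{2k}$ is bounded, to leading order, by $C_k|h|^{2k\gamma}$, and likewise the spatial moment gains a positive power of $|u-u'|$. Since $2k\gamma$ can be made arbitrarily large, the continuity criterion furnishes a version of $l(u,t)$ that is jointly continuous in $(u,t)$ and satisfies the claimed uniform-in-$u$ H\"older modulus in $t$ with any exponent $\gamma_1<\gamma$, together with the random constants $\eta,\eta_1$. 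As this is Berman's theorem, I would invoke \cite{7} for the precise bookkeeping of the constants, reconstructing only the structure outlined above.
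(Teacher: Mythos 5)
The paper offers no proof of this statement: Theorem \ref{thm5} is Berman's theorem, imported verbatim from \cite{7} and used only as a tool in the proof of Theorem \ref{thm8}. Your sketch correctly reconstructs the argument of that cited source --- the Fourier representation of the occupation density, even-moment estimates in which local nondeterminism lower-bounds the variance quadratic form so that the Gaussian $\lambda$-integrals factorize against $b^{-1}$ of the increment lengths, hypothesis 3) for integrability, and a Kolmogorov-type continuity criterion --- so it matches the approach behind the result, with the fine bookkeeping (the role of the exponent $1+2\delta$, the uniformity in $u$ via compactness of the range) appropriately deferred to \cite{7}.
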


 To discuss the existence of the local time for Gaussian integrator $x$ we need reformulation of the notion of local nondeterminism. Denote by $G(e_1, \ldots, e_n)$ the Gram determinant constructed by vectors $e_1, \ldots, e_n.$ Let $g\in C([0; 1], L_2([0; 1])),$\
 $
 \Delta g(t_i)=g(t_{i+1})-g(t_i),\ i=\ov{1, m-1}
 $
 ($C([0; 1], L_2(0; 1))$ is the space of all continuous functions from $[0; 1]$ into $L_2([0; 1])$).

 \begin{lem}
 \label{lem6}
 The Gaussian process $y(t)=(g(t), \xi),$ where $\xi$ is a white noise in $L_2([0; 1])$ is locally nondetermined on $J$ iff for every $m\geq2$
 $$
 \lim_{c\to0}\inf_{t_m-t_1\leq c}
 \frac{G(g(t_1), \Delta g(t_1), \ldots, \Delta g(t_{m-1}))}
 {\|g(t_1)\|^2\|\Delta g(t_1)\|^2\ldots\|\Delta g(t_{m-1})\|^2}>0.
 $$
 \end{lem}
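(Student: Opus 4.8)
The plan is to reduce the statement to elementary Hilbert-space geometry by exploiting the isometry $h\mapsto(h,\xi)$ between $L_2([0;1])$ and the Gaussian space generated by $\xi$. Since $\xi$ is a white noise, $E(h,\xi)(k,\xi)=(h,k)_{L_2}$, so this map is a linear isometry onto a Gaussian subspace of $L_2(\Omega)$. For jointly Gaussian variables the conditional expectation given $y(t_1),\dots,y(t_{m-1})$ coincides with the orthogonal projection onto their closed linear span, and the conditional variance is therefore deterministic and equals the squared distance, in $L_2(\Omega)$, of $y(t_m)-y(t_{m-1})=(\Delta g(t_{m-1}),\xi)$ to that span. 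Transporting everything back through the isometry, I would record the identity
$$\Var\big(y(t_m)-y(t_{m-1})\mid y(t_1),\dots,y(t_{m-1})\big)=\mathrm{dist}^2\big(\Delta g(t_{m-1}),\,H_{m-1}\big),$$
where $H_{m-1}=\mathrm{span}\{g(t_1),g(t_2),\dots,g(t_{m-1})\}\subset L_2([0;1])$, together with $\Var(y(t_m)-y(t_{m-1}))=\|\Delta g(t_{m-1})\|^2$.

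Next I would rewrite $H_{m-1}$ in terms of increments. An upper-triangular change of basis gives $H_{m-1}=\mathrm{span}\{g(t_1),\Delta g(t_1),\dots,\Delta g(t_{m-2})\}$, so that, abbreviating $e_1=g(t_1)$ and $e_j=\Delta g(t_{j-1})$ for $2\le j\le m$, the standard formula expressing a squared distance as a ratio of Gram determinants yields
$$V_m=\frac{\mathrm{dist}^2(e_m,\mathrm{span}\{e_1,\dots,e_{m-1}\})}{\|e_m\|^2}=\frac{G(e_1,\dots,e_m)}{G(e_1,\dots,e_{m-1})\,\|e_m\|^2}.$$
The same identity holds with $m$ replaced by any $2\le j\le m$, and multiplying these from $j=2$ to $j=m$ telescopes the Gram determinants (using $G(e_1)=\|e_1\|^2$) to the clean product identity
$$R_m:=\frac{G(g(t_1),\Delta g(t_1),\dots,\Delta g(t_{m-1}))}{\|g(t_1)\|^2\|\Delta g(t_1)\|^2\cdots\|\Delta g(t_{m-1})\|^2}=\prod_{j=2}^m V_j,$$
where each factor $V_j=V_j(t_1,\dots,t_j)$ is precisely the local-nondeterminism ratio for the points $t_1<\dots<t_j$.

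The equivalence then follows from this identity together with the elementary bound $0\le V_j\le1$ (conditioning cannot increase variance; equivalently, Hadamard's inequality $G\le\prod\|\cdot\|^2$). For the backward direction, keeping only the last factor gives $R_m\le V_m$, whence $\inf_{t_m-t_1\le c}V_m\ge\inf_{t_m-t_1\le c}R_m$, so positivity of the limit for $R_m$ forces it for $V_m$. For the forward direction I would use that whenever $t_m-t_1\le c$ one also has $t_j-t_1\le c$ for every $j\le m$; hence if local nondeterminism supplies constants $\alpha_j>0$ with $V_j\ge\alpha_j$ on $\{t_j-t_1\le c\}$ for $c$ below a common threshold, the product identity gives $R_m\ge\prod_{j=2}^m\alpha_j>0$ uniformly. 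The only delicate point is the bookkeeping in the degenerate case where some of $g(t_1),\Delta g(t_1),\dots$ are linearly dependent, so that a Gram determinant vanishes and the quotient $V_m=R_m/R_{m-1}$ is a priori of the form $0/0$; I would circumvent this by working throughout with the distance characterization of the conditional variance, which is always well defined, rather than with the determinant quotient, and by noting that the product formula $R_m=\prod_j V_j$ remains a valid identity between nonnegative quantities. That, rather than any single computation, is where care is needed.
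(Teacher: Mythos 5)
Your proof is correct and takes essentially the same route as the paper: the whole argument in both cases rests on the telescoping identity
$V_2\cdots V_m=\frac{G(g(t_1),\Delta g(t_1),\ldots,\Delta g(t_{m-1}))}{\|g(t_1)\|^2\|\Delta g(t_1)\|^2\cdots\|\Delta g(t_{m-1})\|^2}$,
which the paper merely states and you derive via the distance-to-span characterization of conditional variance. Your extra care with the two directions of the equivalence and with the degenerate (linearly dependent) case only fills in details the paper leaves implicit.
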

 \begin{proof}
 This is a consequence of definition $V_m$ and relation
 $$
 V_2\cdot\ldots\cdot V_m=\frac
 {\det\cov(x(t_i), x(t_j))^m_{ij=1}}
 {\Var x(t_1)\Var(x(t_2)-x(t_1))\cdot\ldots\cdot\Var(x(t_m)-x(t_{m-1}))}=
 $$
 $$
 \frac{G(g(t_1), \Delta g(t_1), \ldots, \Delta g(t_{m-1}))}
 {\|g(t_1)\|^2\|\Delta g(t_1)\|^2\ldots\|\Delta g(t_{m-1})\|^2}.
 $$
 \end{proof}
 By using the lemma \ref{lem6} one can establish the following statement.
 \begin{thm}[\rm \cite{8}]
 \label{thm8}
 Suppose that the operator $A$ in the representation \eqref{eq2} of $x$ is continuously invertible. Then there exists a version $l(u, t),\ u\in\mbR,\  t\in[0; 1]$ of the local time of $x$ which is jointly continuous in $(u, t)$ and which satisfies a H\"older condition in $t$ uniformly in $u,$ i.e. for every $\gamma<\frac{1}{2}$ there exist positive and finite random variables $\eta$ and $\eta_1$ such that
 $$
 \sup_u|l(u, t+h)-l(u, t)|\leq\eta_1|h|^\gamma
 $$
 for all $s, t, t+h\in[0; 1]$ and all $|h|<\eta.$
 \end{thm}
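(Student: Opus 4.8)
The plan is to apply Berman's Theorem~\ref{thm5} to $y=x$, so that the whole proof reduces to verifying its three hypotheses for the $L_2$-valued function $g(t)=A\1_{[0;t]}$. Indeed $x(t)=(g(t),\xi)$, and since $(h,\xi)$ has variance $\|h\|^2$ we have $E(x(t)-x(s))^2=\|g(t)-g(s)\|^2=\|A\1_{[s;t]}\|^2$ for $s<t$. Condition~1 is immediate: $g(0)=A\1_{[0;0]}=0$, hence $x(0)=0$.

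For condition~3 I would use continuous invertibility of $A$ in the form $\|Af\|\geq\|A^{-1}\|^{-1}\|f\|$. Applying this to $f=\1_{[s;t]}$ gives
$$
E(x(t)-x(s))^2=\|A\1_{[s;t]}\|^2\geq\|A^{-1}\|^{-2}(t-s),
$$
so one may take $b(t)=\|A^{-1}\|^{-1}\sqrt{|t|}$. Then $b(t)^{-1-2\delta}$ is a constant multiple of $t^{-(1+2\delta)/2}$, whose integral over $[0;h]$ is finite precisely when $\delta<\tfrac12$ and equals a constant times $h^{(1-2\delta)/2}$. Consequently $h^{-\gamma}\int_0^h b(t)^{-1-2\delta}\,dt\to0$ whenever $\gamma<\tfrac12-\delta$. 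Given any target exponent $\gamma_1<\tfrac12$ I would fix $\gamma\in(\gamma_1;\tfrac12)$ and then $\delta\in(0;\tfrac12-\gamma)$; this verifies condition~3 and, through the conclusion of Theorem~\ref{thm5} (which yields a H\"older exponent for every value below the $\gamma$ of condition~3), produces the H\"older exponent $\gamma_1$. This is exactly the mechanism forcing the bound $\gamma<\tfrac12$.

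Condition~2, local nondeterminism, is the heart of the matter, and here I would invoke Lemma~\ref{lem6}: it suffices to bound below, uniformly in the partition, the normalized Gram determinant
$$
\frac{G(g(t_1),\Delta g(t_1),\ldots,\Delta g(t_{m-1}))}{\|g(t_1)\|^2\|\Delta g(t_1)\|^2\ldots\|\Delta g(t_{m-1})\|^2}.
$$
The decisive structural observation is that the preimages $\1_{[0;t_1]},\1_{[t_1;t_2]},\ldots,\1_{[t_{m-1};t_m]}$ have pairwise disjoint supports, hence are orthogonal in $L_2([0;1])$, so for them the normalized Gram determinant equals $1$. The vectors occurring above are the images of these orthogonal indicators under $A$, namely $g(t_1)=A\1_{[0;t_1]}$ and $\Delta g(t_i)=A\1_{[t_i;t_{i+1}]}$.

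The main obstacle is therefore the purely geometric estimate -- the content of the appendix -- controlling how much a continuously invertible $A$ can distort the normalized Gram determinant of orthogonal vectors. I would prove it by passing to the positive operator $B=A^*A$, for which $\langle Av_i,Av_j\rangle=\langle Bv_i,v_j\rangle$ and whose spectrum lies in $[\|A^{-1}\|^{-2};\|A\|^2]$. Writing $u_i=v_i/\|v_i\|$ for the (orthonormal) normalized preimages, the normalized Gram determinant of the images becomes $\det(\langle Bu_i,u_j\rangle)_{i,j}\big/\prod_i\langle Bu_i,u_i\rangle$; by the min--max characterization the compression $(\langle Bu_i,u_j\rangle)_{i,j}$ has determinant at least $\|A^{-1}\|^{-2m}$, while each diagonal entry is at most $\|A\|^2$. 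Hence the ratio is at least $(\|A\|\,\|A^{-1}\|)^{-2m}$, a bound independent of the $t_i$ and of $c$. Feeding this into Lemma~\ref{lem6} makes the relevant infimum bounded below by a positive constant, which gives local nondeterminism outright and completes the verification of the hypotheses of Theorem~\ref{thm5}.
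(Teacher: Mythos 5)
Your proof is correct, and its skeleton is the same as the paper's: verify conditions 1)--3) of Theorem \ref{thm5}, with condition 3) handled by $b(t)=c\sqrt{|t|}$ and the choice $\delta<\frac{1}{2}$, $\gamma<\frac{1}{2}-\delta$ (exactly the paper's computation), and local nondeterminism obtained through Lemma \ref{lem6}. The one genuine difference is how the geometric estimate behind local nondeterminism is proved. The paper invokes Lemma \ref{lem8} (Lemma A.1), valid for \emph{arbitrary} vectors: $G(Ae_1,\ldots,Ae_k)\geq c(k)G(e_1,\ldots,e_k)$, proved by contradiction --- Gram--Schmidt is applied to $Af_i/\|Af_i\|$ over orthonormal systems $(f_i)$, and applying $A^{-1}$ to the orthogonalized vectors $q_j$ forces $\|q_j\|\geq(\|A\|\,\|A^{-1}\|)^{-1}$. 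You instead exploit the fact that the preimages $\1_{[0;t_1]},\1_{[t_1;t_2]},\ldots,\1_{[t_{m-1};t_m]}$ occurring in Lemma \ref{lem6} are orthogonal, and bound the normalized Gram determinant of their images via the compression of $B=A^*A$: the compression inherits the lower spectral bound $\|A^{-1}\|^{-2}$ of $B$, so its determinant is at least $\|A^{-1}\|^{-2m}$, while each diagonal entry is at most $\|A\|^2$. Both routes yield the identical constant $(\|A\|\,\|A^{-1}\|)^{-2m}$; yours is direct and quantitative where the paper's is phrased as a contradiction (though, unwound, it contains the same estimate), and yours treats only orthogonal systems, which is all this theorem needs, whereas the paper's lemma is the more general, reusable statement. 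There is no gap in your argument; your careful handling of the exponents ($\gamma_1<\gamma<\frac{1}{2}$, then $\delta<\frac{1}{2}-\gamma$) if anything makes the passage from Theorem \ref{thm5} to the stated H\"older exponent more explicit than in the paper.
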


 The theorem \ref{thm8} was proved in the article of the second author \cite{8}. Here we briefly recall the main steps of the proof, which is based on the following key property of the Gram determinant.
 \begin{lem}
 \label{lem8}
 Suppose that $A$ is a continuously invertible operator in the Hilbert space $H.$ Then for all $k\geq1$ there exists a positive constant $c(k)$ which depends on $k$ such that for any $e_1, \ldots, e_k\in H$ the following relation holds
 $$
 G(Ae_1, \ldots, Ae_k)\geq c(k)G(e_1, \ldots, e_k).
 $$
 \end{lem}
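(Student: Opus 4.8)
The plan is to reduce the infinite-dimensional statement to a finite-dimensional matrix inequality by means of the synthesis operator associated with $e_1, \ldots, e_k$, and then to exploit the monotonicity of the determinant on the cone of nonnegative matrices. First I would record the only consequence of continuous invertibility that is needed: since $\|x\|=\|A^{-1}Ax\|\leq\|A^{-1}\|\,\|Ax\|$ for every $x\in H,$ the operator $A$ is bounded below, so that $A^*A\geq\alpha I$ in the sense of quadratic forms, where $\alpha=\|A^{-1}\|^{-2}>0.$

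Next I would introduce the bounded linear operator $M\colon\mbR^k\to H$ sending the $i$-th coordinate vector to $e_i,$ so that $Mx=\sum_{i=1}^k x_i e_i.$ A direct computation of its adjoint shows that the $k\times k$ Gram matrices are realized as $M^*M=((e_i, e_j))_{i,j=1}^k$ and $M^*A^*AM=((Ae_i, Ae_j))_{i,j=1}^k,$ whose determinants are exactly $G(e_1, \ldots, e_k)$ and $G(Ae_1, \ldots, Ae_k)$ respectively. The coercivity estimate then transfers verbatim to these matrices: for every $x\in\mbR^k$ one has $(M^*A^*AMx, x)=\|AMx\|^2\geq\alpha\|Mx\|^2=\alpha(M^*Mx, x),$ that is $M^*A^*AM\geq\alpha M^*M$ as nonnegative $k\times k$ matrices.

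Finally I would invoke the monotonicity of the determinant: if $P\geq Q\geq0$ are Hermitian nonnegative matrices, then $\det P\geq\det Q$ (when $Q$ is invertible this follows from $\det P/\det Q=\det(Q^{-1/2}PQ^{-1/2})\geq1,$ and the degenerate case $\det Q=0$ is trivial). Applying this with $P=M^*A^*AM$ and $Q=\alpha M^*M$ yields
$$
G(Ae_1, \ldots, Ae_k)=\det P\geq\det(\alpha M^*M)=\alpha^k\det(M^*M)=\alpha^k G(e_1, \ldots, e_k),
$$
so the claim holds with $c(k)=\|A^{-1}\|^{-2k}.$

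I do not expect a serious obstacle; the proof is short once the right viewpoint is fixed. The only point requiring care is the passage from the operator inequality $A^*A\geq\alpha I$ on the infinite-dimensional space $H$ to a genuinely finite-dimensional determinant comparison, which is precisely what the synthesis operator $M$ accomplishes, and which makes the degenerate cases (linearly dependent $e_i,$ where both sides vanish) automatic rather than needing separate treatment.
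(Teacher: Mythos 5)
Your proof is correct, and it takes a genuinely different route from the paper's. The paper first reduces the claim to showing that $\inf G\bigl(Af_1/\|Af_1\|,\ldots,Af_k/\|Af_k\|\bigr)>0$, the infimum running over all orthonormal systems $(f_1,\ldots,f_k)$; it then performs Gram--Schmidt orthogonalization on the vectors $Af_j/\|Af_j\|$, obtaining residuals $q_j$, and argues by contradiction: if $\|q^n_j\|\to0$ along some sequence of orthonormal systems, then applying $A^{-1}$ forces $\bigl\|f^n_j/\|Af^n_j\|-\sum_{i<j}a^n_{ij}f^n_i/\|Af^n_i\|\bigr\|\to0$, which is impossible because orthonormality of the $f^n_i$ makes this norm at least $1/\|Af^n_j\|\geq 1/\|A\|>0$. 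You replace all of this by the quadratic-form inequality $M^*A^*AM\geq\alpha M^*M$ with $\alpha=\|A^{-1}\|^{-2}$, transported through the synthesis operator, plus monotonicity of the determinant on positive semidefinite matrices. What your route buys: an explicit constant $c(k)=\|A^{-1}\|^{-2k}$ (the paper asserts only positivity, although tracking the constants in its argument would in fact yield the same value); automatic treatment of linearly dependent $e_i$, which the paper's reduction to orthonormal systems silently assumes away; and no need for the unproved reduction step itself, namely the invariance of the ratio of the two Gram determinants under an invertible change of basis of the $e_i$, which the paper uses but does not justify. What the paper's route buys: it exhibits the geometric mechanism directly --- each Gram--Schmidt residual of the normalized images $Af_j/\|Af_j\|$ is uniformly bounded below --- which is precisely the form in which the lemma feeds into the local nondeterminism property in Section 2. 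Both arguments ultimately rest on the same single consequence of continuous invertibility, namely that $A$ is bounded below.
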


 The lemma \ref{lem8} is proved in the appendix (Lemma A.1).
 \begin{proof}[Proof of the theorem \ref{thm8}.]
 To prove the theorem let us check that $x$ satisfies conditions 1)--3) of the theorem \ref{thm5}. It is obvious that $x(0)=0.$ The lemma \ref{lem6} and the lemma \ref{lem8} imply that $x$ is locally nondetermined. Let us check that $x$ satisfies the condition 3) of the theorem \ref{thm5}. Let $b(t)=c\sqrt{t},\  c>0.$ Pick $\delta<\frac{1}{2}$ and then $\gamma$ such that $\gamma<\frac{1}{2}-\delta.$  One can see that
 $$
 \lim_{h\to0}h^{-\gamma}\int^h_0t^{-\frac{1}{2}-\delta}dt=\frac{2}{1-2\delta}\lim_{h\to0}h^{\frac{1}{2}-\delta-\gamma}=0.
 $$
\end{proof}

   \section{On continuous dependence of local times of integrators on generating them operators}
   \label{section4}

   Suppose that $A_n, A$ are continuously invertible operators in $L_2([0; 1])$ which generate Gaussian integrators $x_n,\ x,$ i.e.
   $$
   x_n(t)=(A_n\1_{[0; t]}, \xi), \ x(t)=(A\1_{[0; t]}, \xi), \ t\in[0; 1].
   $$
   We proved in the section 2 that there exist the random variables
   $$
   l_n(u):=l_n(u, 1)=\int^1_0\delta_u(x_n(t))dt,
   $$
    $$
   l(u):=l(u, 1)=\int^1_0\delta_u(x(t))dt, \ u\in\mbR.
   $$
   The following statement shows that if the sequence of operators $A_n$ converges strongly to an operator $A,$ then the sequence of local times of integrators $x_n$ converges in mean square to the local time of $x.$
   \begin{thm}
   \label{thm11}
   Suppose that $A_n, A$ are continuously invertible operators in $L_2([0; 1])$ such that

   1) for any $y\in L_2([0; 1])$
   $$
   \|A_ny-Ay\|\to0, \ n\to\infty;
   $$

   2) $\sup_{n\geq1}\|A^{-1}_n\|<\infty.$

   Then
   $$
   E\int_{\mbR}(l_n(u)-l(u))^2du\to0, \ n\to\infty.
   $$
      \end{thm}
      \begin{proof}
      To prove the theorem it suffices to check that
      $$
      E\int_{\mbR}l^2_n(u)du\to  E\int_{\mbR}l^2(u)du,\ n\to\infty,
      $$
      $$
      E\int_{\mbR}l_n(u)l(u)du\to  E\int_{\mbR}l^2(u)du,\ n\to\infty.
      $$
      It follows from the theorem B.1  that
      $$
       E\int_{\mbR}l_n(u)l_n(u)du=E\int^1_0\int^1_0\delta_0(x_n(t)-x_n(s))dsdt=
       $$
       $$
       =\lim_{\ve\to0}E\int^1_0\int^1_0f_\ve(x_n(t)-x_n(s))dsdt=
       $$
       $$
       =\frac{2}{\sqrt{2\pi}}\int_{\Delta_2}
       \frac{dsdt}{\|A_n\1_{[s; t]}\|}.
       $$
       It follows from the invertibility of operators $A_n$ and the condition 2) that
       $$
       \frac{1}{\|A_n\1_{[s; t]}\|}\leq\sup_{n\geq1}\|A^{-1}_n\|\frac{1}{\sqrt{t-s}}.
       $$
       The Lebesgue dominated convergence theorem implies that
       $$
       E\int_{\mbR}l^2_n(u)du\to E\int_{\mbR}l^2(u)du, \ n\to\infty.
       $$
       Let us check that
       $$
       E\int_{\mbR}l_n(u)l(u)du\to E\int_{\mbR}l^2(u)du, \ n\to\infty.
       $$
       Again by using the theorem B.1  one can write
       $$
       E\int_{\mbR}l_n(u)l(u)du= E\int_0^1\int^1_0\delta_0(x_n(t)-x(s))dsdt=
       $$
       $$
       =\frac{2}{\sqrt{2\pi}}\int_{\Delta_2}\frac{dsdt}{\|A_n\1_{[0; t]}-A\1_{[0; s]}\|}=
       $$
       $$
       =\frac{2}{\sqrt{2\pi}}\int_{\Delta_2}\frac{dsdt}{\|A_n(\1_{[0; t]}-A^{-1}_nA\1_{[0; s]})\|}\leq
       $$
       $$
       \leq\frac{2}{\sqrt{2\pi}}\sup_{n\geq1}\|A^{-1}_n\|
       \int_{\Delta_2}\frac{dsdt}{\|\1_{[0; t]}-\vk_n(s)\|},
       $$
       where $\vk_n(s)=A^{-1}_nA\1_{[0; s]}.$ It follows from the lemma A.4 (see appendix A) that the sequence $\Big\{\frac{1}{\|\1_{[0 ; t]}-\vk_n(s)\|}\Big\}_{n\geq1}$ is uniformly integrable. Consequently,
       $$
       E\int_{\mbR}l_n(u)l(u)du\to E\int_{\mbR}l^2(u)du, \ n\to\infty.
       $$
\end{proof}

 \section{Conditional moments of Brownian self-intersection local time}
 \label{section2'}
 In this part of the article we will discuss the relationships between the norm of the end-point of Brownian path and its self-intersection local time. As it was mentioned in the introduction, such relation reflects the fact that the real polymers have biggest Flory number then ideal due to the excluded volume effect. Here we will study the conditional distribution of the self-intersection local time for Brownian motion under condition that its end-point tends to infinity. Begin with the one-dimensional Brownian motion $w.$ As it was disscused for example in \cite{25}, the self-intersection local time for $w$ exists. Denote it by
$$
T_2=\int_{\Delta_2}\delta_0(w(t_2)-w(t_1))dt_1dt_2,\ \Delta_2=\{0\leq t_1\leq t_2\leq 1\}.
$$
The following statement holds.

\noindent
{\bf Theorem 10}. {\it For any $p>0$ and $\beta\in(0; 1)$
$$
E(T^p_2/w(1)=a)=O(|a|^{-\beta}), \ a\to\infty.
$$
}

\begin{proof}
It is enough to consider $p$ integer. Then
$$
E(T^p_2/w(1)=a)=
$$
$$
=E\int_{\Delta^p_2}\prod^p_{i=1}\delta_0(\eta(t^i_2)-\eta(t^i_1))d\vec{t},
$$
where $\eta(t)=w(t)-tw(1)+at, \ t\in[0; 1].$ In terms of white noise $\xi=\dot{w}$ the process $\eta$ has a representation
$$
\eta(t)=(Qg_0(t), \xi)+at.
$$
Here $g_0(t)=\1_{[0; t]}$ and $Q$ is a projection onto orthogonal complement to $g_0(1)=\1_{[0; 1]}.$ To estimate the conditional expectation for $T^p_2$ let us use the following lemma from Appendix (Lemma A.5).

\noindent
{\bf Lemma 11}. {\it Let the elements $e_1, \ldots, e_n$ of $L_2([0; 1])$ and a projection $Q$ are such that $Qe_1, \ldots, Qe_n$ are linearly independent. Suppose that the elements $f, g$  satisfy relationships \newline $\forall \ i=1, \ldots, n:$
$$
(f, e_i)=(g, Qe_i).
$$
Then
$$
\|P_1f\|\leq \|P_2g\|,
$$
where $P_1$ and $P_2$ are the orthogonal projections on the linear span of $e_1, \ldots, e_n$ and $Qe_1, \ldots, Qe_n$ respectively.
}

To apply this lemma for our situation denote by $\Gamma^Q_{\vec{t}}$ and $P^Q_{\vec{t}}$ the Gram determinant for $Qe_1, \ldots, Qe_p$ and the projection on its linear span, where $e_i=\1_{[t^i_1; t^i_2]}, i=1, \ldots, p,$ and $Q$ is a projection onto $\1^\perp_{[0; 1]}.$ Then
$$
E\int_{\Delta^p_2}\prod^p_{i=1}\delta_0(\eta(t^i_2)-\eta(t^i_1))d\vec{t}=
$$
$$
=\int_{\Delta^p_2}
\frac{
e^{
-\frac{1}{2}\|P^Q_{\vec{t}}h_{\vec{t}}\|^2a^2
}
}
{\Gamma^Q_{\vec{t}}}d\vec{t}.
$$
Here $h_{\vec{t}}$ is taken in a such way that \newline $\forall \ i=1, \ldots, p:$
$$
(h_{\vec{t}}, Qe_i)=t^i_2-t^i_1.
$$
It follows from the previous lemma, that
$$
\|P^Q_{\vec{t}}h_{\vec{t}}\|\geq \|P_{\vec{t}}\1_{[0; 1]}\|,
$$
where $P_{\vec{t}}$ is a projection onto the linear span of $\1_{[t^1_1; t^1_2]}, \ldots, \1_{[t^p_1; t^p_2]}.$ Consequently, for arbitrary $k=1, \ldots, p$
$$
e^{-\frac{1}{2}\|P^Q_{\vec{t}}h_{\vec{t}}\|^2a^2}\leq e^{-\frac{1}{2}(t^k_2-t^k_1)a^2}.
$$
To find the estimation for $\Gamma^Q_{\vec{t}}$ note that
$$
\Gamma^Q_{\vec{t}}=\Gamma(\1_{[0; 1]}, \1_{[t^1_1; t^1_2]}, \ldots, \1_{[t^p_1; t^p_2]}).
$$
Let us use the following lemma (Lemma A.6).

\noindent
{\bf Lemma 12}. {\it
Let $\Delta_0=\O,$ and $\Delta_1, \ldots, \Delta_n$ be the subsets of $[0; 1].$ Then
$$
\Gamma(\1_{\Delta_1}, \ldots, \1_{\Delta_n})\geq\prod^n_{k=1}|\Delta_k\setminus\mathop{\cup}\limits^{k-1}_{j=1}\Delta_j|.
$$
}

As a consequence of this lemma one can obtain the following estimation for the Gram determinant

$$
\Gamma^Q_{\vec{t}}\geq\prod^N_{j=1}|\wt{\Delta}_j|,
$$
where $\wt{\Delta}_j, j=1, \ldots, N$ are the intervals from the partition of $[0; 1]$ by the end-points of the intervals $[t^1_k, t^2_k], k=1, \ldots, p.$
Now using the previous estimation for $\|P^Q_{\vec{t}}h_{\vec{t}}\|$ one can get that
$$
E\int_{\Delta^p_2}\prod^p_{i=1}\delta_0(\eta(t^i_2)-\eta(t^i_1))d\vec{t}\leq
$$
$$
\leq(2p)!\frac{1}{\sqrt{2\pi}^p}\int_{\Delta_{2p}}
\frac
{e^{-\frac{1}{2}a^2(t_{2p}-t_{2p-1})}}
{(\prod^{2p}_{j=0}(t_{j+1}-t_j))^{\frac{1}{2}}}d\vec{t},
$$
where $t_0=1$ and $t_{2p+1}=1.$ Consider the integral with respect to the last variable $t_{2p}$
$$
\int^1_{2p-1}\frac
{e^{-\frac{1}{2}a^2(t_{2p}-t_{2p-1})}}
{\sqrt{(t_{2p}-t_{2p-1})(1-t_{2p})}}dt_{2p}.
$$
Using expression $\delta=1-t_{2p-1}$ and changing variable one can rewrite the last integral as
$$
\int^\delta_0\frac{e^{-\frac{1}{2}a^2s}}
{\sqrt{s(\delta-s)}}ds=
\int^1_0\frac{e^{-\frac{1}{2}a^2\delta s'}}
{\sqrt{s'(1-s')}}ds'.
$$
Using H\"{o}lder inequality one can get for $\alpha\in(1; 2)$
$$
\int^1_0\frac{e^{-\frac{1}{2}a^2\delta s'}}
{\sqrt{s'(1-s')}}ds'\leq
c_\alpha\Big(
\int^1_0e^{-\frac{1}{2}a^2\delta\frac{\alpha}{\alpha-1} s'ds'}
\Big)^{\frac{\alpha}{\alpha-1}}\leq
$$
$$
\leq\wt{c}_\alpha
\frac{1}{a^{2-\frac{2}{\alpha}}}
\frac{1}{\delta^{1-\frac{1}{\alpha}}},
$$
where $c_\alpha$ and $\wt{c}_\alpha$ are the positive constants which depend on $\alpha.$ Finally, for any $\alpha\in(1; 2)$
$$
E\int_{\Delta^p_2}\prod^p_{i=1}\delta_0(\eta(t^i_2)-\eta(t^i_1))d\vec{t}\leq
$$
$$
\leq\wt{\wt{c}}_\alpha
\int_{\Delta_{2p-1}}
\prod^{2p-2}_{j=0}
\frac{1}{\sqrt{t_{j+1}-t_j}}\cdot
\frac{1}{(1-t_{2p-1})^{1-\frac{1}{\alpha}+\frac{1}{2}}}d\vec{t}
\cdot a^{-2+\frac{2}{\alpha}}.
$$
\end{proof}
For planar Wiener process $w$ on the interval $[0; 1]$ consider trajectories with $w(1)=a.$ One can expect that if $\|a\|$ is large, then the trajectory of $w$ has a small number of self-intersections. The conditional distribution of the Wiener process under the condition $w(1)=a$ coincides with the distribution of the Brownian bridge
$$
y_a(t)=w(t)-tw(1)+at, \ t\in[0; 1].
$$
Let us investigate the dependence of the self-intersection local time of the process $y_a(t),\ t\in[0; 1]$ on $\|a\|.$ Denote by
$$
T_2(a, \alpha)=\int_{\Delta_2(a, \alpha)}
\delta_0(y_a(t_2)-y_a(t_1))dt_1dt_2,
$$
where
$$
\Delta_2(a, \alpha)=\{(t_1, t_2): \ 0\leq t_1\leq 1-\|a\|^{-\alpha},\ t_1+\|a\|^{-\alpha}\leq t_2\leq1\}.
$$
The self-intersection local time
$$
\int_{\Delta_2(a,\alpha)}\delta_{0}(w(t_2)-w(t_1))dt_1dt_2
$$
exists (see \cite{25}). As before, one can check that
$$
T_{2}(a,\alpha)=E\left(\int_{\Delta_2(a,\alpha)}\delta_{0}(w(t_2)-w(t_1))dt_1dt_2/w(1)=a\right).
$$
 Let us prove that the following statement holds.

\noindent
{\bf Theorem 13}. {\it
For

1) $\alpha=2:$
$$
\lim_{\|a\|\to+\infty}ET_2(a, \alpha)=\frac{1}{2\pi}\int^{+\infty}_1\frac{1}{y}e^{-\frac{y}{2}}dy;
$$

2) $\alpha<2:$
$$
\lim_{\|a\|\to+\infty}ET_2(a, \alpha)=0;
$$

3) $\alpha>2:$
$$
\lim_{\|a\|\to\infty}ET_2(a, \alpha)=+\infty.
$$
}

\begin{proof}
Let $\Delta t_1=t_2-t_1.$
\begin{equation}
\label{eq1_9}
ET_2(a, \alpha)=
\int_{\Delta_2(a, \alpha)}
\frac{1}{2\pi\Delta t_1(1-\Delta t_1)}e^{\frac{1}{2}\frac{\Delta t_1\|a\|^2}{1-\Delta t_1}}d\vec{t}.
\end{equation}
Let $t_1=s_1, \ \|a\|^2\Delta t_1=s_2.$ Then \eqref{eq1_9} equals
$$
\int^{1-\|a\|^{-\alpha}}_0\int^{(1-s_1)\|a\|^2}_{\|a\|^{-\alpha+2}}
\frac{\|a\|^2}{2\pi s_2(\|a\|^2-s_2)}
e^{-\frac{1}{2}\frac{\|a\|^2s_2}{\|a\|^2-s_2}}ds_2ds_1=
$$
$$
=\int^{\|a\|^2}_{\|a\|^{-\alpha+2}}
\int^{1-\frac{s_2}{\|a\|^2}}_{0}
\frac{\|a\|^2}{2\pi s_2(\|a\|^2-s_2)}
e^{-\frac{1}{2}\frac{\|a\|^2s_2}{\|a\|^2-s_2}}ds_1ds_2=
$$
$$
=
\int^{\|a\|^2}_{\|a\|^{-\alpha+2}}
\Big(1-\frac{s_2}{\|a\|^2}\Big)
\frac{\|a\|^2}{2\pi s_2(\|a\|^2-s_2)}
e^{-\frac{1}{2}\frac{\|a\|^2s_2}{\|a\|^2-s_2}}ds_2=
$$
\begin{equation}
\label{eq2_9}
=
\frac{1}{2\pi}
\int^{\|a\|^2}_{\|a\|^{-\alpha+2}}
\frac{1}{s_2}
e^{-\frac{1}{2}\frac{\|a\|^2s_2}{\|a\|^2-s_2}}ds_2.
\end{equation}
Put $\frac{\|a\|^2s_2}{\|a\|^2-s_2}=y.$ Then \eqref{eq2_9} equals
\begin{equation}
\label{eq3_9}
\frac{1}{2\pi}
\int^{+\infty}_{\frac{\|a\|^{2-\alpha}}{1-\|a\|^{-\alpha}}}
\frac{\|a\|^2}{y(\|a\|^2+y)}e^{-\frac{y}{2}}dy.
\end{equation}
Note that for $\alpha=2$
$$
\frac{1}{2\pi}\int^{+\infty}_{\frac{1}{1-\|a\|^{-2}}}
\frac{\|a\|^2}{y(\|a\|^2+y)}e^{-\frac{y}{2}}dy\to
\frac{1}{2\pi}\int^{+\infty}_1\frac{1}{y}e^{-\frac{y}{2}}dy, \ \|a\|\to+\infty.
$$
One can see that for $\alpha<2$
$$
\frac{1}{2\pi}\int^{+\infty}_{\frac{\|a\|^{2-\alpha}}{1-\|a\|^{-2}}}
\frac{\|a\|^2}{y(\|a\|^2+y)}e^{-\frac{y}{2}}dy\leq
$$
\begin{equation}
\label{eq4_9}
\leq\frac{1}{2\pi}
\frac{(1-\|a\|^{-\alpha})^2}{\|a\|^{2-\alpha}}
e^{-\frac{\|a\|^{2-\alpha}}{1-\|a\|^{-\alpha}}}.
\end{equation}
The estimate \eqref{eq4_9} implies that for $\alpha<2$
$$
\lim_{\|a\|\to+\infty}
\frac{1}{2\pi}
\int^{+\infty}_{\frac{\|a\|^{2-\alpha}}{1-\|a\|^{-\alpha}}}
\frac{\|a\|^2}{y(\|a\|^2+y)}
e^{-\frac{y}{2}}dy=0.
$$
On the other hand for $\alpha>2$
$$
\frac{1}{2\pi}
\int^{+\infty}_{\frac{\|a\|^{2-\alpha}}{1-\|a\|^{-\alpha}}}
\frac{\|a\|^2}{y(\|a\|^2+y)}
e^{-\frac{y}{2}}dy\geq
$$
$$
\geq
\frac{1}{2\pi}
\int^{\frac{1}{m}}_{\frac{\|a\|^{2-\alpha}}{1-\|a\|^{-\alpha}}}
\frac{\|a\|^2}{y(\|a\|^2+y)}
e^{-y}dy\geq
$$
\begin{equation}
\label{eq5_9}
\geq
\frac{1}{2\pi}
\frac{m\|a\|^2}{\|a\|^2+\frac{1}{m}}
\Big(-e^{\frac{1}{m}}+e^{-\frac{\|a\|^{2-\alpha}}{1-\|a\|^{-\alpha}}}\Big).
\end{equation}

It follows from \eqref{eq5_9} that for $\alpha>2$
$$
\lim_{\|a\|\to+\infty}
\frac{1}{2\pi}
\int^{+\infty}_{\frac{\|a\|^{2-\alpha}}{1-\|a\|^{-\alpha}}}
\frac{\|a\|^2}{y(\|a\|^2+y)}
e^{-\frac{y}{2}}dy=+\infty.
$$

\end{proof}

    \section*   { Appendix A. {\large On some geometry of Hilbert-valued functions}}

In this appendix we collect some useful estimates for Gramian matrix and Gram determinant which describe the changing of geometry of Hilbert-valued functions under the action of a linear continuous operator. Also for $ 0\leq\alpha<1$ we check that
    $$
    \sup_{y\in L_2([0;1])}\int^1_0\frac{dt}{\|\1_{[0; t]}-y\|^{1+\alpha}}.
    $$

Let $B(e_1, \ldots, e_n)$ be the Gramian matrix constructed from the vectors $e_1,\ldots, e_n$ in the Hilbert space $H.$

    \noindent
    {\bf Lemma A.1.} {\it Suppose that $A$ is a continuously invertible operator in the Hilbert space $H.$ Then for all $k\geq1$ there exists a positive constant $c(k)$ which depends on $k$ and $A$ such that for any $e_1, \ldots, e_k\in H$ the following relation holds
    $$
    G(Ae_1, \ldots, Ae_k)\geq c(k)G(e_1, \ldots, e_k).
    $$
}

    \begin{proof}
    To prove the lemma it suffices to check that
    $$
    \inf G\left(\frac{Af_1}{\|Af_1\|}, \ldots,\frac{Af_k}{\|Af_k\|} \right)>0,
    $$
    where infimum is taking over all orthonormal systems $(f_1, \ldots, f_k).$ Using the Gram--Schmidt orthogonalization procedure build the orthogonal system $q_1, \ldots, q_k$ from $\frac{Af_1}{\|Af_1\|}, \ldots,\frac{Af_k}{\|Af_k\|}.$

    Here
    $$
    q_j=\frac{Af_j}{\|Af_j\|}-\sum^{j-1}_{i=1}a_{ij}\frac{Af_i}{\|Af_i\|}
    $$
    with some $a_{ij}.$ Let us prove that
     $$
   \inf_{(f_1, \ldots, f_k)} G\left(\frac{Af_1}{\|Af_1\|}, \ldots,\frac{Af_k}{\|Af_k\|} \right)=
    $$
    $$
    =\inf_{(f_1, \ldots, f_k)}\prod^k_{i=1}\|q_i\|^2>0.
    $$
    If it is not so, then there exists the sequence $\{f^n_1, \ldots, f^n_k\}_{n\geq1}$ and $j=\ov{1, k}$ such that $\|q^n_j\|\to0,\ n\to\infty.$ The invertibility of the operator $A$ implies that
    $$
    \left\|\frac{f^n_j}{\|Af^n)j\|}-\sum^{j-1}_{i=1}a^n_{ij}\frac{f^n_i}{\|Af^n_i\|}
\right\|\to0, n\to\infty.
$$
But
$$
 \left\|\frac{f^n_j}{\|Af^n_j\|}-\sum^{j-1}_{i=1}a^n_{ij}\frac{f^n_i}{\|Af^n_i\|}
\right\|\geq\frac{1}{\|Af^n_j\|}>0.
$$
\end{proof}

       \noindent
       {\bf Lemma A.2.}
{\it
Suppose that $A$ is a continuously invertible operator in the Hilbert space $H.$ Then for any $e_0=0,\ e_1, \ldots, e_n \in H$ such that $e_{i+1}-e_i\perp e_{j+1}-e_j,\ i,j=\ov{1, n-1},\  i\ne j$ there exists a positive constant $c$ such that for all $\vec{u}\in\mbR^n$ with $u_0=0$ the following relation holds
$$
(B^{-1}(Ae_1, \ldots, Ae_n)\vec{u}, \vec{u})\geq c\sum^{n-1}_{i=0}
\frac{(u_{i+1}-u_i)^2}{\|e_{i+1}-e_i\|^2}.
$$
}
\begin{proof}
It was proved in \cite{10} that in the case $\vec{u}=((h_0, Ae_1), \ldots, (h_0, Ae_n)), $ \ $h_0\in H$ the following relation holds
$$
(B^{-1}(Ae_1, \ldots, Ae_n)\vec{u}, \vec{u})=\|P_{Ae_1\ldots Ae_n}h_0\|^2,
$$
where $P_{e_1\ldots e_n}$ is a projection on $LS\{e_1, \ldots, e_n\}$ (linear span generated by elements $e_1, \ldots, e_n$). Note that
$$
((h_0, Ae_1), \ldots, (h_0, Ae_n))=((A^*h_0, e_1), \ldots, (A^*h_0, e_n)).
$$
Since $(A^*h_0, e_1)=u_1, \ldots, (A^*h_0, e_n)=u_n,$ then
$$
A^*h_0=\sum^{n-1}_{i=0}\frac{e_{i+1}-e_i}{\|e_{i+1}-e_i\|^2}(u_{i+1}-u_i)+r,
\eqno(A.1)
$$
where $r\perp e_i,\ i=\ov{0, n}.$

Consequently,
$$
h_0=\sum^{n-1}_{i=0}{A^*}^{-1}\Big(\frac{e_{i+1}-e_i}{\|e_{i+1}-e_i\|^2}(u_{i+1}-u_i)\Big)+{A^*}^{-1}r.
$$
Let us remark that  continuous invertibility of the operator $A$ implies the existence of ${A^*}^{-1}.$ It follows from (A.1) that
$$
(B^{-1}(Ae_1, \ldots, Ae_n)\vec{u}, \vec{u})=
$$
$$
=\Big\|{A^*}^{-1}\Big(\sum^{n-1}_{i=0}
\frac{(e_{i+1}-e_i)(u_{i+1}-u_i)}{\|e_{i+1}-e_i\|^2}+r\Big)\Big\|^2
$$
$$
\geq c\sum^{n-1}_{i=0}
\frac{(u_{i+1}-u_i)^2}{\|e_{i+1}-e_i\|^2}+c\|r\|^2\geq
$$
$$
\geq
c\sum^{n-1}_{i=0}
\frac{(u_{i+1}-u_i)^2}{\|e_{i+1}-e_i\|^2}.
$$

\end{proof}

The following statement describes the direct application of lemma A.1 and lemma A.2.

For $s_1, \ldots, s_n\in \Delta_n, u_1, \ldots, u_n\in\mbR$ let $p_{s_1\ldots s_n}(u_1, \ldots, u_n)$ be the density of Gaussian vector $(x(s_1), \ldots, x(s_n))$ in $\mbR^n.$ Here $x$ is Gaussian integrator with the representation \eqref{eq2}.

\noindent
{\bf Lemma A.3} \cite{8}. {\it Suppose that $A$ in the representation \eqref{eq2} is continuously invertible. Then there exist positive constants $c_1(n), c_2$ such that the following relation holds
$$
p_{s_1\ldots s_n}(u_1, \ldots, u_n)\leq
\frac{c_1(n)}
{\sqrt{s_1(s_2-s_1)\ldots(s_n-s_{n-1})}}e^{-\frac{1}{2}c_2}\sum^{n-1}_{j=0}\frac{(u_{j+1}-u_j)^2}{s_{j+1}-s_j}.
$$
}
For a proof see \cite{8}.

\noindent
{\bf Lemma A.4.}
{\it
For any $0\leq\alpha<1$
$$
\sup_{y\in L_2([0;1])}\int^1_0\frac{1}{\|\1_{[0; t]}-y\|^{1+\alpha}}dt<+\infty.
$$
}

\begin{proof}
Put $g_0(t):=\1_{[0; t]}.$ Note that
$$
\int^1_0\frac{1}{\|g_0(t)-y\|^{1+\alpha}}dt=\int^{+\infty}_0\lambda\{t: \|g_0(t)-y\|^{-(1+\alpha)}\geq z\}dz,
$$
where $\lambda$ is the Lebesgue measure on $
\mbR.$ Then to prove the statement of the lemma it suffices to check that for $b>0$
$$
\sup_{y\in L_2([0;1])}\int^{+\infty}_b\lambda\{t: \|g_0(t)-y\|^{-(1+\alpha)}\geq z\}dz<+\infty.
$$
For any $g_0(t_0), g_0(t_1)$ from the closed ball $\ov{B}\Big(y, \frac{1}{z^{\frac{1}{1+\alpha}}}\Big)$ the following relation holds
$$
|t_0-t_1|=\|g_0(t_0)-g_0(t_1)\|^2{\leq}\frac{4}{z^{\frac{2}{1+\alpha}}}.\eqno(A.2)
$$
(A.2) implies that
$$
\Big\{t: \|g_0(t)-y\|\leq\frac{1}{z^{\frac{1}{1+\alpha}}}\Big\}\subset\Big[t_0-\frac{4}{z^{\frac{2}{1+\alpha}}}; t_0+\frac{4}{z^{\frac{2}{1+\alpha}}}\Big],
$$
for some $t_0$ such that
$$
\|g_0(t)-y\|\leq\frac{1}{z^{\frac{1}{1+\alpha}}}.\eqno(A.3)
$$
It follows from (A.3) that for $0\leq\alpha<1$
$$
\int^\infty_b
\lambda\{t:
\|g_0(t)-y\|\leq\frac{1}{z^{\frac{1}{1+\alpha}}}
\}dz\leq 4\int^{+\infty}_b\frac{dz}{z^{\frac{2}{1+\alpha}}}<+\infty.
$$
\end{proof}

\noindent
{\bf Lemma A.5}. {\it Let the elements $e_1, \ldots, e_n$ of $L_2([0; 1])$ and a projection $Q$ are such that $Qe_1, \ldots, Qe_n$ are linearly independent. Suppose that the elements $f, g$  satisfy relationships \newline $\forall \ i=1, \ldots, n:$
$$
(f, e_i)=(g, Qe_i).
$$
Then
$$
\|P_1f\|\leq \|P_2g\|,
$$
where $P_1$ and $P_2$ are the orthogonal projections on the linear span of $e_1, \ldots, e_n$ and $Qe_1, \ldots, Qe_n$ respectively.
}

\begin{proof}
Note, that \newline $\forall \ i=1, \ldots, n$
$$
(g, Qe_i)=(P_2g, Qe_i)=(QP_2g, e_i)=(P_1f, e_i).
$$
Consequently,
$$
P_1f=P_1QP_2g=P_1P_2g.
$$
Hence
$$
\|P_1f\|\leq\|P_2g\|.
$$
\end{proof}

\noindent
{\bf Lemma A.6}. {\it
Let $\Delta_0=\O,$ and $\Delta_1, \ldots, \Delta_n$ be the subsets of $[0; 1].$ Then
$$
\Gamma(\1_{\Delta_1}, \ldots, \1_{\Delta_n})\geq\prod^n_{k=1}|\Delta_k\setminus\mathop{\cup}\limits^{k-1}_{j=1}\Delta_j|.
$$
}
\begin{proof}
Since
$$
\Gamma(\1_{\Delta_1}, \ldots, \1_{\Delta_n})=|\Delta_1|\prod^n_{k=2}\|h_k\|^2,
$$
where $h_k$ is the orthogonal component of $\1_{\Delta_k}$ with respect to the linear span of $\1_{\Delta_1}, \ldots, \1_{\Delta_{k-1}},$ then it is enough to prove that for $k=2, \ldots, n$
$$
\|h_k\|^2\geq|\Delta_k\setminus\mathop{\cup}\limits^{k-1}_{j=1}\Delta_j|.
$$
For the set $\Delta$ such that $|\Delta|>0$ denote by $P_\Delta$ the orthogonal projection into $\1_\Delta.$ Then
$$
\|P_{\Delta_j}\1_{\Delta_k}\|^2=\frac{|\Delta_k\cap\Delta|^2}{|\Delta|}\leq\big|\Delta_k\cap\Delta\big|.
$$
Consider the representation
$$
\cup^{k-1}_{j=1}\Delta_j=\cup^l_{i=1}H_i,
$$
where $|H_i|>0$ and disjoint, all $H_i$ belong to the algebra generated by $\{\Delta_j\}$ and every $\Delta_j$ can be obtained as the union of the certain $H_i.$ Then the linear span of $\1_{\Delta_1, \ldots, \1_{\Delta_{k-1}}}$ is a subset of the linear span of $\1_{H_1}, \ldots, \1_{H_l}.$ Hence
$$
\|h_k\|^2\geq|\Delta_k|-\sum^l_{i=1}|\Delta_k\cap H_i|=|\Delta_k\setminus\mathop{\cup}\limits^{k-1}_{j=1}\Delta_j|.
$$
\end{proof}

\section*{Appendix B. {\large On some relations between generalized func\-tio\-nals from white noise}}
Consider linearly independent elements $f_1, \ldots, f_n\in L_2([0; 1]).$ Here we investigate conditions on elements $r_j\in L_2([0; 1]),\ j=\ov{1, n-1}$ that allow to establish the following relation
$$
\int_{\mbR}\prod^n_{k=1}\delta_0((f_k, \xi)-u)du=\prod^{n-1}_{j=1}\delta_{0}((r_j, \xi)),
\eqno{\rm(B.1)}
$$
which is understood as equality of the generalized functionals from white noise \cite{15} and will be checked using Fourier--Wiener transform.

The following statement discribes one of the possible choices for $r_j,\  j=\ov{1, n-1}.$

\noindent
{\bf Theorem B.1.} {\it
Let $f_1, \ldots, f_n$ be linearly independent elements in $L_2([0; 1]).$ Then
$$
\int_{\mbR}\prod^n_{k=1}\delta_0((f_k, \xi)-u)du=\prod^{n-1}_{k=1}\delta_0((f_{k+1}-f_k, \xi)).
\eqno{\rm (B.2)}
$$
}

\begin{proof}
To prove the statement let us calculate the Fourier--Wiener transform of the left-hand side and the right-hand side of the equality (B.2). Denote by $\cT(\alpha)(h)$ the Fourier--Wiener transform of random variable $\alpha.$  One can check that
$$
\cT\Big(\prod^{n-1}_{j=1}\delta_0((r_j, \xi))\Big)(h)=
$$
$$
=\frac{1}{(2\pi)^{\frac{n-1}{2}}\sqrt{G(r_1,\ldots, r_{n-1})}}
e^{-\frac{1}{2}\|P_{r_1\ldots r_{n-1}}h\|^2}
\eqno{\rm (B.3)}
$$
(see \cite{1}). Let us find the Fourier--Wiener transform of  $\int_{\mbR}\prod^n_{k=1}\delta_0((f_k, \xi)-u)du$

$$
\cT\Big(\int_{\mbR}\prod^{n}_{k=1}\delta_0((f_k, \xi)-u)du\Big)(h)=
$$
$$
=\int_{\mbR}\frac{1}{(2\pi)^{\frac{n}{2}}\sqrt{G(f_1,\ldots, f_{n})}}
e^{-\frac{1}{2}(B^{-1}(f_1,\ldots, f_n)(u\vec{e}-\vec{a}), u\vec{e}-\vec{a})}du,
\eqno{\rm(B.4)}
$$
where $\vec{e}=\begin{pmatrix}1\\
\vdots\\
1\end{pmatrix}, $ $\vec{a}=\begin{pmatrix}(f_1, h)\\
\vdots\\
(f_n, h)
\end{pmatrix}.$  By integrating (B.4) over $u$ one can get
$$
\frac{1}
{(2\pi)^{\frac{n-1}{2}}
\sqrt{G(f_1, \ldots, f_n)}\sqrt{(B^{-1}(f_1,\ldots, f_n)e, e)}}\cdot
$$
$$
\cdot \exp\Big\{-\frac{1}{2}\Big((B^{-1}(f_1,\ldots, f_n)a,a)-
\frac{(B^{-1}(f_1,\ldots, f_n)a,e)^2}
{(B^{-1}(f_1,\ldots, f_n)e, e)}\Big)\Big\}.
\eqno{\rm(B.5)}
$$
It is not difficult to check that
$$
(B^{-1}(f_1,\ldots, f_n)a,a)=\|P_{f_1\ldots f_n}h\|^2.
$$
Consider the function $f\in LS\{f_1, \ldots, f_n\}$ such that $(f, f_k)=1,\  k=\ov{1, n}.$
Then
$$
(B^{-1}_{f_1\ldots f_n}\vec{e}, \vec{e})=\|P_{f_1\ldots f_n}f\|^2=\|f\|^2
$$
$$
(B^{-1}_{f_1\ldots f_n}\vec{a}, \vec{e})=(P_{f_1\ldots f_n} h, f).
$$
Therefore, (B.5) equals
$$
\frac{1}
{(2\pi)^{\frac{n-1}{2}}\sqrt{
G(f_1, \ldots, f_n)}\|f\|}
e^{-\frac{1}{2}(\|P_{f_1\ldots f_n}h\|^2-\|P_fP_{f_1\ldots f_n}h\|^2)}.
$$
Denote by $f\overset{\perp}{=}\{v\in LS\{f_1, \ldots, f_n\}: (v, f)=0\}.$ Then
$$
\cT\Big(\int_{\mbR}\prod^{n}_{k=1}\delta_0((f_k, \xi)-u)du\Big)(h)=
$$
$$
=\frac{1}
{(2\pi)^{\frac{n-1}{2}}\sqrt{
G(f_1, \ldots, f_n)}\|f\|}
e^{-\frac{1}{2}\|P_{f^\perp}h\|^2}.
\eqno{\rm(B.6)}
$$
By comparing (B.3) and (B.6) we obtain the following conditions on elements $r_k,\ k=\ov{1, n-1}$

1) $LS\{r_1, \ldots, r_{n-1}\}=f^\perp;$

2) $G(r_1, \ldots, r_{n-1})=G(f_1, \ldots, f_n)\|f\|^2.$

Let us check that $r_j:=f_{j+1}-f_j$ satisfy conditions 1), 2).  Really, put $M=LS\{f_2-f_1, \ldots, f_n-f_{n-1}\}.$ Then $f\perp M.$ Denote by $r$ the distance from $f_1$ to $M.$ One can see that
$$
G(f_1,\ldots, f_n)=G(f_1, f_2-f_1, \ldots, f_n-f_{n-1})=r^2G(f_2-f_1, \ldots, f_n-f_{n-1}).
$$
Since
$$
(f_1, \frac{f}{\|f\|})=\|f_1\|\cos\alpha=r,
$$
then $r=\frac{1}{\|f\|}.$ Consequently,
$$
\|f\|^2G(f_1, \ldots, f_{n-1})=G(f_2-f_1, \ldots, f_n-f_{n-1}).
$$
\end{proof}

  \end{document}